\newlength{\originalbase}
\newtheorem{theorem}{Theorem}[section]
\newtheorem{proposition}[theorem]{Proposition}
\renewcommand{\SS}{\mathbb{S}}
\newcommand{\RR}{\mathbb{R}}
\newcommand{\KK}{\mathscr{K}}
\begin{document}

\subjclass[2000]{52A20, 52A38, 52A39, 52A40}
\title{On the equality conditions of the Brunn-Minkowski theorem}
\author{Daniel A. Klain}
\address{Department of Mathematical Sciences,
University of Massachusetts Lowell,
Lowell, MA 01854 USA
}

\email{Daniel\_{}Klain@uml.edu}

\begin{abstract}
This article describes a new proof of the  
{\em equality} condition for the Brunn-Minkowski inequality.
The Brunn-Minkowski Theorem
asserts that, \sloppy
for compact convex sets $K,L \subseteq \RR^n$,
the $n$-th root of the Euclidean volume $V_n$ is concave with
respect to Minkowski combinations; that is, for $\lambda \in [0,1]$, 
$$V_{n}((1-\lambda)K + \lambda L)^{1/n} \geq 
(1-\lambda) V_{n}(K)^{1/n} + \lambda V_{n}(L)^{1/n}.$$
The equality condition asserts that,
if $K$ and $L$ both have positive volume, then equality holds for some
$\lambda \in (0,1)$
if and only if
$K$ and $L$ are homothetic. 
\end{abstract}

\maketitle

Denote $n$-dimensional Euclidean space by $\RR^n$.  Given compact convex subsets $K,L \subseteq \RR^n$
and $a,b \geq 0$, denote
$$aK + bL = \{ax + by \; | \; x \in K \hbox{ and } y \in L\}.$$
An expression of this form is called a {\em Minkowski combination} or 
{\em Minkowski sum}.  Since $K$ and $L$ are convex sets, the set $aK + bL$ is also convex.  
Convexity also implies that $aK + bK = (a+b)K$ for all $a,b \geq 0$, although this does not hold for general
sets.  Two sets $K$ and $L$ are {\em homothetic} if $K = aL + x$ for some $a > 0$ and some point $x \in \RR^n$.
The $n$-dimensional
(Euclidean) volume of $K$ will be denoted $V_n(K)$.

The Brunn-Minkowski Theorem
asserts that the $n$-th root of the Euclidean volume $V_n$ is concave with
respect to Minkowski combinations; that is, for $\lambda \in [0,1]$, 
\begin{equation}
V_n((1-\lambda)K + \lambda L)^{1/n} \geq (1 - \lambda)V_n(K)^{1/n} + \lambda V_n(L)^{1/n}.
\label{bmcc}
\end{equation}
If $K$ and $L$ have non-empty interiors, then equality holds for some
$\lambda \in (0,1)$
if and only if
$K$ and $L$ are homothetic.  This article describes a new proof of this
{\em equality} condition, using a homothetic projection theorem of Hadwiger.

The Brunn-Minkowski Theorem is the centerpiece of modern convex geometry \cite{Bonn2,Gard2006,red,tony}.  This theorem
encodes as special cases the classical isoperimetric inequality (relating volume and surface area \cite[p. 318]{red}), 
Urysohn's inequality (relating volume and mean width, and strengthening the isodiametric inequality \cite[p. 318]{red}), and 
families of inequalities relating mean projections \cite[p. 333]{red}.  
The concavity implied by~(\ref{bmcc}) leads to families of second-order discriminant-type inequalities for mixed volumes, such as
Minkowski's second mixed volume inequality \cite[p. 317]{red} 
and (after substantial additional labor) the Alexandrov-Fenchel inequality \cite[p. 327]{red}, 
a difficult and far reaching result with consequences in geometric analysis, combinatorics, and algebraic geometry \cite{Ewald}.
Analytic generalizations of the Brunn-Minkowski theorem include the Prekopa-Leindler inequality \cite{Lieb,Hen-Mac}.  
The Brunn-Minkowski theorem also serves as the starting point for analogous developments such as the 
dual Brunn-Minkowski theory for star-shaped sets \cite{Lut1988}, the $L_p$-Brunn-Minkowski theory \cite{bmf1}, 
capacitary Brunn-Minkowski inequalities \cite{Borell}, and Brunn-Minkowski inequalities for integer lattices \cite{Gard-Gron}.
Of special interest are the {\em equality} conditions for~(\ref{bmcc}), which imply, for example, 
the uniqueness of solutions to the Minkowski problem relating
convex bodies to measures on the unit sphere \cite{Bonn2,red}.  
A recent and comprehensive survey on the Brunn-Minkowski inequality and its variations,
applications, extensions, and generalizations, can be found in \cite{Gard-BM}.

There are many ways to prove the Brunn-Minkowski Theorem.  For the case of compact convex sets, 
Kneser and S\"{u}ss used an induction
argument on dimension via slicing \cite[p. 310]{red}.  
This proof first verifies the inequality~(\ref{bmcc}), 
and then addresses the equality case with a more subtle argument.

Hadwiger and Ohmann gave a more general proof
by using the concavity of the geometric mean to prove~(\ref{bmcc}) for case of rectangular boxes, and
following with a divide-and-conquer argument that extends~(\ref{bmcc}) to finite unions of boxes.
They conclude with an approximation step that verifies~(\ref{bmcc}) for all {\em measureable} (including non-convex)
sets \cite{Had-Oh} (see also \cite{Gard-BM} and \cite[p. 297]{Webster}).  
Hadwiger and Ohmann also show that if equality holds in~(\ref{bmcc}) then $K$ and $L$ must both be 
compact convex sets with at most a set of measure zero removed.  Therefore, 
the question of equality in~(\ref{bmcc}) is addressed completely (up to measure zero)
by the case of compact convex sets.  

Another especially intuitive proof
of the inequality~(\ref{bmcc}) for compact convex sets
uses Steiner symmetrization \cite{Gard-BM,Webster}; however, this method relies on approximation and gives no insight into
the equality conditions.

In contrast to earlier methods, the proof of the equality condition for~(\ref{bmcc}) presented in this note 
uses {\em orthogonal projection} rather than slicing.  Sections 1, 2, and 3 provide some technical background.  The new proof
is presented in Section 4.

\section{Background} 

Let $\KK_n$ denote 
the set of compact {\em convex} subsets of $\RR^n$.    If $u$ is a unit vector in
$\RR^n$, denote by $K_u$ the orthogonal projection of a set $K$ onto the subspace $u^\perp$.
More generally, if $\xi$ is a $d$-dimensional subspace of $\RR^n$, denote by $K_\xi$
the orthogonal projection of a set $K$ onto the subspace $\xi$.  The boundary of a compact
convex set $K$ relative to its affine hull will be denoted by $\partial K$.

Let $h_K: \RR^n \rightarrow \RR$ denote the support function of a compact convex set $K$;
that is,
$$h_K(v) = \max_{x \in K} x \cdot v.$$
The standard separation theorems of convex geometry imply that
the support function $h_K$ characterizes the body $K$; that is, $h_K = h_L$ if and only if $K=L$.
If $\xi$ is a subspace of $\RR^n$ then the support function of $K_\xi$ within the subspace 
$\xi$ is given by the restriction
of $h_K$ to $\xi$.   Support functions satisfy the identity
$h_{aK+bL} = ah_K + bh_L$.  (See, for example, any of 
\cite{Bonn2,red,Webster}).

If $u$ is a unit vector in
$\RR^n$, denote by $K^u$ the support set of $K$ in the direction of $u$; that is,
$$K^u = \{x \in K \; | \; x \cdot u = h_K(u) \}.$$
If $P$ is a convex polytope, then $P^u$ is the maximal face of $P$ having $u$ in its outer normal cone.

Given $K, L \in \KK_n$ and $\epsilon > 0$, the function $V_n(K + \epsilon L)$ is a polynomial in $\epsilon$, 
whose coefficients are given by {\em Steiner's formula} \cite{Bonn2,red,Webster}.  
In particular, the following derivative is well-defined:
\begin{align}
nV_{n-1,1}(K,L) & \; = \; \lim_{\epsilon \rightarrow 0} \frac{V_n(K + \epsilon L) - V_n(K)}{\epsilon} 
\; = \; \left. \frac{d}{d \epsilon} \right|_{\epsilon = 0} V_n(K + \epsilon L)
\label{dv}
\end{align}
The expression $V_{n-1,1}$ is an example of a {\em mixed volume} of $K$ and $L$.
Since the volume of any set is invariant under translation, it follows from the definition~(\ref{dv}) that,
for any point $p \in \RR^n$, 
\begin{align*}
V_{n-1,1}(K+p, L) = V_{n-1,1}(K, L+p) = V_{n-1,1}(K, L).
\end{align*}

If $P$ is a polytope, then the mixed volume $V_{n-1, 1}(P, K)$
satisfies the classical ``base-height" formula 
\begin{equation}
V_{n-1, 1}(P, K) = \frac{1}{n} \sum_{u \perp \partial P} h_K(u) V_{n-1}(P^u),
\label{polyvol}
\end{equation}
where this sum is finite, taken over all outer unit normals $u$ to the {\em facets} 
on the boundary $\partial P$.  Since compact convex sets can be approximated
(in the Hausdorff metric) by convex polytopes,
the equation~(\ref{polyvol}) implies that, for $K,L,M \in \KK_n$ and $a,b \geq 0$,
\begin{itemize}
\item $V_{n-1,1}(K, L) \geq 0$,

\item $V_{n-1,1}(K, K) = V_n(K)$,

\item $V_{n-1,1}(aK, bL) = a^{n-1}b V_{n-1,1}(K, L)$

\item $V_{n-1,1}(K, L+M) = V_{n-1,1}(K, L) + V_{n-1,1}(K, M)$,
\end{itemize}
where the final identity follows from~(\ref{polyvol}) and the linearity
of support functions with respect to Minkowski sums.  For $n \geq 3$, the function 
$V_{n-1,1}$ is {\em not} typically symmetric in its parameters:
usually $V_{n-1,1}(K, L) \neq V_{n-1,1}(L, K)$.  In particular,
the fourth (linearity) property above does not typically
hold for the first parameter of $V_{n-1,1}$.  

Important special (and more well-known) cases of mixed volumes result from suitable choices of $K$
or $L$.  For example, if $B$ is the unit ball, centered at the origin, then~(\ref{polyvol}) implies that
$n V_{n-1,1}(P,B)$ gives the {\em surface} area of $P$.  A limiting argument then yields the same fact for
$n V_{n-1,1}(K,B)$, where $K$ is any compact convex set.  Similar arguments imply that, if $\overline{ou}$
denotes the line segment with endpoints at $o$ and a unit vector $u$, then
\begin{align}
n V_{n-1,1}(K,\overline{ou}) = V_{n-1}(K_u),
\label{projvol}
\end{align}
the $(n-1)$-volume of the corresponding orthogonal projection of $K$.
These and many other properties of convex bodies and mixed volumes 
are described in each of \cite{Bonn2,red,Webster}.

One especially intuitive proof
of the Brunn-Minkowski inequality~(\ref{bmcc}) for compact convex sets
uses Steiner symmetrization.  Given a unit vector $u$, 
view $K$ as a family of line segments parallel to $u$.  Slide these segments along $u$ so that
each is symmetrically balanced around the hyperplane $u^\perp$.  By Cavalieri's principle, the volume
of $K$ is unchanged.  Call the new set $st_u(K)$.  It is not difficult to show that 
$st_u(K)$ is also convex, and that $st_u(K) + st_u(L) \subseteq st_u(K+L)$.  A little more work verifies
the following intuitive assertion: if you iterate Steiner symmetrization of $K$ through
a suitable sequence of directions, these iterations tend to round out the body $K$ to a Euclidean ball $B_K$
having the same volume as the original set $K$.  Meanwhile, it follows from the 
aforementioned superadditivity relation that $B_K + B_L \subseteq B_{K+L}$, so that
\begin{align*}
V_n(K + L)^{1/n} &= V_n(B_{K + L})^{1/n} \\
&\geq V_n(B_K + B_L)^{1/n} \\
&= V_n(B_K)^{1/n} + V_n(B_L)^{1/n}\\
&= V_n(K)^{1/n} + V_n(L)^{1/n}
\end{align*}
Technical details behind Steiner symmetrization and the proof outlined above can be found in 
\cite[pp. 306-314]{Webster}.  Once again, because of the approximation step (taking the limit of a sequence of Steiner
symmetrizations), it is not clear from this proof exactly when equality would hold in~(\ref{bmcc}).  

This matter is addressed
by the following theorem.
\begin{theorem}[Minkowski]
If $K$ and $L$ are compact convex sets with non-empty interiors, 
then equality holds in~(\ref{bmcc}) if and only if $K$ and $L$
are homothetic.
\end{theorem}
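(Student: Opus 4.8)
The plan is to treat the two implications separately, the converse being immediate and the forcing of homothety being the substance.

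For the easy direction, suppose $L = aK + x$ with $a > 0$. Using the convexity identity $bK + cK = (b+c)K$, one computes $(1-\lambda)K + \lambda L = ((1-\lambda) + \lambda a)K + \lambda x$, so the translation- and scaling-invariance of $V_n$ give $V_n((1-\lambda)K+\lambda L)^{1/n} = ((1-\lambda)+\lambda a)V_n(K)^{1/n}$, which (since $V_n(L)^{1/n} = aV_n(K)^{1/n}$) is exactly $(1-\lambda)V_n(K)^{1/n} + \lambda V_n(L)^{1/n}$. Hence equality holds in~(\ref{bmcc}) for every $\lambda$.

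For the hard direction, first I would reduce ``equality at one $\lambda$'' to a rigidity statement. The function $F(\lambda) = V_n((1-\lambda)K + \lambda L)^{1/n}$ is concave on $[0,1]$ by~(\ref{bmcc}), with $F(0) = V_n(K)^{1/n}$ and $F(1) = V_n(L)^{1/n}$; since a concave function that meets its chord at an interior point agrees with the chord throughout, equality at one $\lambda \in (0,1)$ forces $F$ to be affine on all of $[0,1]$. Differentiating at $\lambda = 0$ and using~(\ref{dv}) then extracts Minkowski's first inequality as an equality, $V_{n-1,1}(K,L)^n = V_n(K)^{n-1}V_n(L)$ (and, by the symmetry of $F$, the companion identity with $K,L$ interchanged). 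Thus the problem becomes: this mixed-volume equality forces $K$ and $L$ to be homothetic.

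I would prove the forcing statement by induction on $n$, using orthogonal projection to descend in dimension. The cases $n = 1$ (any two segments are homothetic) and $n = 2$ (the classical planar equality case) serve as the base. For $n \ge 3$ the goal is to show that the projection pair $K_u, L_u$ is homothetic for every unit vector $u$, after which Hadwiger's homothetic projection theorem lifts this to the homothety of $K$ and $L$ themselves --- Hadwiger's theorem being precisely the tool that reconciles the a priori independent dilation ratios across directions into a single global one. The link between dimensions is the projection formula~(\ref{projvol}): since $((1-\lambda)K + \lambda L)_u = (1-\lambda)K_u + \lambda L_u$ and $V_{n-1}(M_u) = nV_{n-1,1}(M,\overline{ou})$, the $(n-1)$-dimensional volumes of the projected combination are governed by $n$-dimensional mixed volumes against the segment $\overline{ou}$. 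The aim is to show the projected pair realizes equality in the $(n-1)$-dimensional inequality~(\ref{bmcc}), so that the inductive hypothesis applies.

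The main obstacle is exactly this transfer of the equality condition to the projections. It is tempting to perturb by a segment, comparing $K + t\overline{ou}$ with $L + t\overline{ou}$: since $(1-\lambda)(K + t\overline{ou}) + \lambda(L + t\overline{ou}) = ((1-\lambda)K + \lambda L) + t\overline{ou}$ and $V_n(\,\cdot + t\overline{ou})$ is affine in $t$ with slope $V_{n-1}((\,\cdot)_u)$, one can differentiate~(\ref{bmcc}) in $t$ at the known equality configuration. However, this only yields that $\lambda \mapsto V_{n-1}((1-\lambda)K_u + \lambda L_u)$ is concave, which is strictly weaker than equality in the $(n-1)$-dimensional inequality and does not by itself force homothety of the projections. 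The real work therefore lies in a sharper projection argument that upgrades this to the genuine lower-dimensional equality (or establishes projection-homothety directly), and it is here --- rather than in the concluding appeal to Hadwiger's theorem --- that the heart of the proof resides.
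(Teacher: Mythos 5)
Your proposal correctly handles the easy direction, correctly reduces equality at one interior $\lambda$ to affineness of $F$ on all of $[0,1]$ (hence to equality in Minkowski's first inequality~(\ref{mmv})), and correctly identifies the paper's architecture for the converse: induct on dimension, show the projections $K_u, L_u$ are homothetic for every $u$, and invoke Hadwiger's Theorem~\ref{rogers}. But you then stop exactly at the crucial step, acknowledging that you do not know how to transfer the equality condition to the projections. That transfer is the entire content of the proof, so what you have is an outline with the central argument missing, not a proof. Your diagnosis of why the naive segment perturbation fails is sound --- differentiating in $t$ along $K + t\overline{ou}$ does not by itself produce the lower-dimensional equality case --- but diagnosing the obstacle is not the same as overcoming it.

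The idea that fills the gap is a first-variation argument against an \emph{arbitrary} body $M$, not just a segment. After normalizing $V_n(K) = V_n(L) = 1$, equality forces $V_n(K_\lambda) = 1$ and $V_{n-1,1}(K_\lambda, L) = 1$ for all $\lambda$. Now suppose $V_{n-1,1}(K_\lambda, M) \leq 1$. Linearity of $V_{n-1,1}$ in its second argument gives $V_{n-1,1}(K_\lambda, (1-x)L + xM) \leq 1$ for $x \in [0,1]$, and Minkowski's inequality~(\ref{mmv}) (already established, with $K_\lambda$ as first argument) bounds $f(x) = V_n((1-x)L + xM)$ above by $1 = f(0)$; hence $f'(0) \leq 0$, which by~(\ref{dv0}) reads $V_{n-1,1}(L, M) \leq 1$. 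Homogeneity upgrades this to $V_{n-1,1}(L,M) \leq V_{n-1,1}(K_\lambda, M)$ for every $M$, and running the argument with the roles reversed yields the identity $V_{n-1,1}(K_\lambda, M) = V_{n-1,1}(L, M)$ for all $M$ and all $\lambda$. Specializing $M = \overline{ou}$ and using~(\ref{projvol}) gives $V_{n-1}\bigl((1-\lambda)K_u + \lambda L_u\bigr) = V_{n-1}(L_u)$ for all $\lambda$, which \emph{is} the $(n-1)$-dimensional equality case, so the induction closes. (The same identity also supplies the base case $n=2$: taking $M$ to range over triangles and applying Proposition~\ref{tri} shows $K$ and $L$ are translates, rather than citing the planar case as known.) You should supply this variational step; without it the proof does not stand.
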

While homothety
is evidently sufficient, its necessity is far from obvious.

Simple arguments show that the Brunn-Minkowski inequality~(\ref{bmcc}) is equivalent to  
{\em Minkowski mixed volume inequality: }
\begin{equation}
V_{n-1,1}(K,L)^n \geq V_n(K)^{n-1} V_n(L),
\label{mmv}
\end{equation}
where equality conditions are the same as for~(\ref{bmcc}).  

Note that~(\ref{mmv}) is trivial if either $V_n(K)=0$ or $V_n(L)=0$.  Moreover, 
both sides of~(\ref{mmv}) are positively homogeneous of degree $n(n-1)$ with respect to
scaling the body $K$ and positively homogeneous of degree $n$ with respect to
scaling the body $L$.  It follows that, for $K$ and $L$ with non-empty interiors, the
inequality~(\ref{mmv}) is equivalent to the assertion that
\begin{equation}
V_{n-1,1}(K,L)^n \geq 1,
\label{mmv1}
\end{equation}
whenever $V_n(K) = V_n(L) = 1$.

To prove~(\ref{mmv1}), and the equivalent~(\ref{mmv}), using~(\ref{bmcc}), 
suppose that $V_n(K) = V_n(L) = 1$, and let 
$$f(t) \; = \; V_n((1-t)K + tL) \; = \; (1-t)^n V_n \left( K+ \tfrac{t}{1-t}L \right),$$
for $t \in [0,1)$.  By~(\ref{dv}) and the chain rule,
\begin{align}
f'(0) \;  = \; -n V_n(K) + nV_{n-1,1}(K,L) \; = \; -n + nV_{n-1,1}(K,L).
\label{dv0}
\end{align}
Since $f^{1/n}$ is concave by~(\ref{bmcc}), we have $f \geq 1$ on $[0,1)$, while $f(0) = 1$,
so that $f'(0) \geq 0$, and  $V_{n-1,1}(K,L) \geq 1$.

To prove~(\ref{bmcc}) using~(\ref{mmv}) the argument is even simpler.  Denote $K_t =(1-t)K + tL$.
Then
\begin{align*}
V_n(K_t) \, = \, V_{n-1,1}(K_t,K_t) \, & = \,  (1-t) V_{n-1,1}(K_t,K) + t V_{n-1,1}(K_t,L) \\
& \geq \, (1-t) V_n(K_t)^{\frac{n-1}{n}} V(K)^{\frac{1}{n}} 
+ tV_n(K_t)^{\frac{n-1}{n}} V(L)^{\frac{1}{n}}, 
\end{align*}
by two applications of~(\ref{mmv}).  The inequality~(\ref{bmcc}) then follows after division
by  $V_n(K_t)^{\frac{n-1}{n}}$.

For a more complete discussion, see any of \cite{Bonn2,Gard-BM,red,Webster}.

\section{Mixed area}

Denote the special case of $2$-dimensional volume $V_2$ by $A$ for area,
and denote $V_{1,1}(K,L)$ by $A(K,L)$, the {\em mixed area}. 
Unlike the higher dimensional mixed volumes, the mixed area is symmetric in its parameters: 
$A(K,L) = A(L,K)$.
If $\Delta$ is a triangle with outward edge unit normals $u_1, u_2, u_3$, then 
\begin{equation}
A(K,\Delta) = A(\Delta, K) = \frac{1}{2}\sum_i h_K(u_i) |\Delta^{u_i}|,
\label{mah}
\end{equation}
where $|\Delta^{u_i}|$ denotes the length of the $i$-th edge of the triangle.  
This identity leads to the following proposition.
\begin{proposition} Let $K, L \in \KK_2$, and suppose that 
\begin{equation}
A(K, \Delta) = A(L, \Delta) 
\label{mat}
\end{equation}
for every triangle
$\Delta$ in $\RR^2$.  Then $K$ and $L$ are translates.
\label{tri}
\end{proposition}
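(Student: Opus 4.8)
The plan is to work with the difference of support functions $g = h_K - h_L$ and to show that the hypothesis forces $g$ to be linear, since $K$ and $L$ are translates precisely when $h_K - h_L$ has the form $v \mapsto p\cdot v$ for a fixed $p \in \RR^2$. First I would recast the hypothesis using the mixed-area identity~(\ref{mah}). If a triangle $\Delta$ has outward edge unit normals $u_1,u_2,u_3$ and edge lengths $\ell_1,\ell_2,\ell_3$, then the edge vectors close up; applying the $90^\circ$ rotation to this closure relation yields Minkowski's identity $\ell_1 u_1 + \ell_2 u_2 + \ell_3 u_3 = 0$. Setting $w_i = \ell_i u_i$ and using the positive homogeneity of support functions, $h_K(u_i)\ell_i = h_K(w_i)$, so~(\ref{mah}) reads $A(K,\Delta) = \tfrac12\sum_i h_K(w_i)$. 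Hence the assumption $A(K,\Delta)=A(L,\Delta)$ is equivalent to $g(w_1)+g(w_2)+g(w_3)=0$. As $\Delta$ ranges over all nondegenerate triangles, the triples $(w_1,w_2,w_3)$ range over all triples of nonzero, non-collinear vectors summing to $0$; writing $w_3 = -(w_1+w_2)$, this says
\[
g(x) + g(y) + g\bigl(-(x+y)\bigr) = 0
\]
for every linearly independent pair $x,y \in \RR^2$.

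The decisive step is to upgrade this relation from independent pairs to all pairs. Since $g$ is a difference of support functions it is continuous, so $F(x,y) = g(x)+g(y)+g(-(x+y))$ is continuous on $\RR^2\times\RR^2$. It vanishes on the set of linearly independent pairs, which is dense, so $F \equiv 0$ everywhere. This extension to the degenerate configurations is exactly what unlocks the argument: setting $y=0$ and using $g(0)=0$ gives $g(-x)=-g(x)$, so $g$ is odd; feeding oddness back into $F\equiv 0$ gives $g(x+y)=g(x)+g(y)$. Combined with the positive homogeneity $g(tv)=tg(v)$ for $t>0$ inherited from support functions, oddness promotes this to full homogeneity, so $g$ is both additive and homogeneous, hence linear.

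A linear functional on $\RR^2$ has the form $g(v) = p\cdot v$ with $p = (g(e_1),g(e_2))$. Therefore $h_K(v) = h_L(v) + p\cdot v = h_{L+p}(v)$ for all $v$, and since support functions determine convex bodies, $K = L+p$; that is, $K$ and $L$ are translates, which also makes the converse direction immediate.

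The main obstacle I anticipate is the bookkeeping in the first paragraph: one must verify that genuine triangles realize \emph{exactly} the triples of nonzero, non-collinear vectors summing to zero (through the closure relation $\sum_i \ell_i u_i = 0$), and that homogeneity cleanly absorbs the edge lengths into the arguments of $g$. Once the problem has been recast as the Cauchy-type relation $F\equiv 0$, the passage from ``triangles only'' to ``all degenerate limits'' by continuity, and the consequent extraction of oddness and additivity, are the genuinely non-obvious moves; everything after that is formal.
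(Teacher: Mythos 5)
Your proof is correct, but it follows a genuinely different route from the paper. The paper normalizes first: it translates $K$ and $L$ into the positive corner so that $h_K(-e_1)=h_K(-e_2)=h_L(-e_1)=h_L(-e_2)=0$, then tests against right triangles having two edge normals among $\{-e_1,-e_2\}$; formula~(\ref{mah}) then collapses to a single term and yields $h_K(u)=h_L(u)$ directly, one quadrant at a time, giving $K=L$ after the normalization. You instead leave the bodies where they are and convert the hypothesis into a functional equation for $g=h_K-h_L$: using the closure relation $\sum_i \ell_i u_i = 0$ and positive homogeneity, the hypothesis becomes $g(x)+g(y)+g(-(x+y))=0$ for linearly independent $x,y$, which you extend by continuity and density to all pairs, extract oddness and additivity, and conclude that $g$ is linear, i.e.\ $g(v)=p\cdot v$ and $K=L+p$. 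The steps all check out: the realization of every independent pair by an actual triangle (with the correct choice of $\pm 90^\circ$ rotation to get \emph{outward} normals), the continuity of $g$, $g(0)=0$, and the passage from additivity plus full homogeneity to linearity are all sound. The paper's argument is shorter and needs no limiting or density step, at the cost of a normalization and a small case analysis over quadrants; yours is more conceptual, making visible that the proposition is exactly the statement that the continuous solutions of this Cauchy-type equation are the linear functionals, which correspond precisely to translations --- a viewpoint that transfers to other situations where one wants to show a difference of support functions is linear.
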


\begin{proof} Translate $K$ and $L$ so that both lie in the first quadrant of $\RR^2$ and are supported
by the coordinate axes.  (In other words, slide them both ``into the positive corner.")  
If $e_1$ and $e_2$ respectively
denote the unit vectors along the two coordinate axes, we now have
$h_K(-e_1) = h_K(-e_2) = 0$, and similarly for $L$.  Since mixed area is invariant under translation, 
the identity~(\ref{mat}) still holds for every triangle $\Delta$.  

If $u$ is a unit vector with positive coordinates, let $\Delta$ be a right triangle having
outward unit normals $-e_1, -e_2, u$.  Since
$h_K(-e_i) = h_L(-e_i) = 0$, it follows from~(\ref{mah}) and~(\ref{mat}) that $h_K(u) = h_L(u)$.

If $u$ is a unit vector in one of the other 3 quadrants, a similar argument is then made (using a triangle
with unit normals $u$, one of the $-e_i$, and a suitable choice from the first quadrant) to show that
$h_K(u) = h_L(u)$ once again.  It follows that $h_K = h_L$, so that $K=L$ after the initial translations
of $K$ and $L$ into the positive corner.
\end{proof}

\section{Bodies with homothetic projections are homothetic}

In Section~\ref{sec.last} we give a proof of the equality case for~(\ref{bmcc}) and~(\ref{mmv}),
using a projection argument that relies in the following elementary theorem of Hadwiger \cite{Had-proj}.
\begin{theorem}[The Homothetic Projection Theorem] Suppose that $K, L \in \KK_n$ 
have non-empty interiors, where $n \geq 3$.
If $K_u$ and $L_u$ are homothetic for all unit vectors $u$, then
$K$ and $L$ are homothetic as well.
\label{rogers}
\end{theorem}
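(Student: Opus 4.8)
The plan is to prove the Homothetic Projection Theorem by bootstrapping from the case $n=2$ (which is essentially Proposition~\ref{tri}) up to general $n$, using the hypothesis on projections one dimension at a time. Let me think about how to organize this.

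First, I would want to fix the scaling factors. The hypothesis says each projection $K_u$ is homothetic to $L_u$, so for each unit vector $u$ there is a ratio $\lambda(u) > 0$ and a translation so that $L_u = \lambda(u) K_u + (\text{vector})$. The first key step is to show that $\lambda(u)$ is actually a constant $\lambda$ independent of $u$. The natural way to see this: any two distinct directions $u, w$ share lower-dimensional projections in common subspaces, and projecting $K_u$ and $L_u$ further down onto $u^\perp \cap w^\perp$ must agree with the analogous further projection of $K_w, L_w$. Since projecting a homothet scales by the same ratio, the consistency of these overlapping projections forces $\lambda(u) = \lambda(w)$. After normalizing by replacing $L$ with $\lambda^{-1} L$ (or $K$ with $\lambda K$), I may assume all projections are \emph{translates}: $K_u$ and $L_u$ are translates for every $u$.

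Having reduced to the translate case, the goal becomes showing that $K$ and $L$ themselves are translates, i.e. that $h_K - h_L$ is the restriction of a linear function. The strategy is to use the mixed-volume machinery from Section~1, but the cleanest route uses the mixed area of two-dimensional projections. Here is where I would invoke the $n=2$ result: pick any $2$-dimensional subspace $\xi$, and consider the projections $K_\xi$ and $L_\xi$. If I can show these planar bodies are translates for every plane $\xi$, then comparing support functions (which restrict to the support functions of projections) direction by direction gives $h_K(u) - h_L(u)$ equals a linear function on each $2$-plane, hence globally linear, so $K$ and $L$ are translates. To get that each $K_\xi$ and $L_\xi$ are translates, I would use Proposition~\ref{tri}: it suffices to check $A(K_\xi, \Delta) = A(L_\xi, \Delta)$ for every triangle $\Delta \subseteq \xi$. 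The identity~(\ref{projvol}) relates a $1$-codimensional projection volume to a mixed volume against a segment, and iterating/projecting this down to a plane should express $A(K_\xi, \Delta)$ in terms of data that only sees the projections $K_u$ onto various hyperplanes; since those projections are translates of $L_u$ and mixed area is translation invariant, the two mixed areas must coincide.

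The main obstacle I anticipate is the first step---promoting the pointwise ratios $\lambda(u)$ to a single global constant, and more subtly, handling the translation vectors coherently. Homothety allows both a scaling \emph{and} an independent translation for each $u$, so even after fixing the scale I must rule out the possibility that the translate-vectors vary in a way that cannot be undone by a single global translation of $L$. The resolution should again come from the overcompleteness of projection data: the translation applied in direction $u$ is determined up to the ambiguity of translating within $u^\perp$, and forcing agreement on overlapping lower-dimensional projections (where the dimension $n \geq 3$ is used, since then any two hyperplanes meet in a subspace of dimension $\geq 1$) pins these vectors down to the restriction of a single ambient translation. Carrying out this consistency argument carefully---essentially a cocycle condition on the family of translations---is the technical heart of the proof; the reduction to Proposition~\ref{tri} afterward is comparatively routine bookkeeping with support functions.
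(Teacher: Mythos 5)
Your outline is essentially correct and completable, but it takes a genuinely different route from the proof in the paper (due to Rogers). The paper does not first establish a global scaling ratio and then patch translations; instead it normalizes once at the outset --- scale and translate $L$ so that $K_{e_n}=L_{e_n}$ and both bodies are supported by the halfspace $(e_n^\perp)^+$ --- and then proves the exact equality $K_u=L_u$ for every $u\in e_n^\perp$. The mechanism is the same one you propose for constancy of $\lambda(u)$: compare the two homothets on the common $(n-2)$-dimensional projection $\xi=u^\perp\cap e_n^\perp$ (this is precisely where $n\ge 3$ enters, since $V_{n-2}(L_\xi)>0$ forces the ratio to be $1$ and kills the component of the translation vector in $\xi$), after which the supporting-halfspace condition $h_K(-e_n)=h_L(-e_n)=0$ eliminates the remaining $e_n$-component. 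Since every direction of $\SS^{n-1}$ lies in $u^\perp$ for some $u\in e_n^\perp$, this already gives $h_K=h_L$. The ``cocycle'' problem you flag as the technical heart is thereby sidestepped entirely, which is the main economy of the paper's argument.

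Two remarks on your own route. First, the worry about patching the translation vectors dissolves in the support-function formulation you yourself propose: once each $2$-plane projection satisfies $K_\xi=L_\xi+w_\xi$, the function $f=h_K-h_L$ restricts on each $2$-dimensional subspace $\xi$ to the linear functional $\langle w_\xi,\cdot\rangle$, and a positively homogeneous function that is linear on every $2$-dimensional subspace is linear on $\RR^n$ (additivity for $x,y$ follows from linearity on a $2$-plane containing both); no coherence conditions on the family $\{w_\xi\}$ need to be checked separately. Second, the detour through Proposition~\ref{tri} and mixed areas is unnecessary and, as written, nearly circular: after normalizing so that each hyperplane projection $K_u$ is a translate of $L_u$, every further projection --- in particular every $K_\xi$ with $\dim\xi=2$, since $\xi$ lies in some $u^\perp$ --- is automatically a translate of $L_\xi$, which is exactly the conclusion you were trying to extract from the equality of mixed areas. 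Proposition~\ref{tri} is needed in the paper only as the $n=2$ base case of the induction in Section~\ref{sec.last}, not in the proof of the projection theorem. With these two simplifications your plan closes up into a correct, if slightly longer, alternative proof.
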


For completeness of presentation, here is
an elementary proof of Theorem~\ref{rogers} due to Rogers \cite{rogers}.
\begin{proof}   Let
$e_i = (0, \cdots, 1, \cdots, 0)$ denote the unit vector 
having unit $i$th coordinate,
for $i = 1, \ldots n$.

Translate and scale $K$ and $L$ so that both sets are supported by
the positive coordinate halfspace $(e_n^\perp)^+$, and moreover so that 
$K_{e_n} = L_{e_n}$.  The latter is possible, because we are given
that $K_{e_n}$ and $L_{e_n}$ are initially homothetic.

It remains to show that, after these
translations and dilations, we have $K = L$.

To show this, let $u \in e_n^\perp$ be a unit vector.
Recall from the hypothesis of the theorem that 
$K_u = a L_u + v$
for some $a > 0$ and some $v \in u^\perp$.

Let $\xi = \hbox{Span}\{u, e_n\}^\perp = u^\perp \cap e_n^\perp.$
Since $\xi \subseteq e_n^\perp$
and $K_{e_n} = L_{e_n}$,
it follows that
$$L_\xi 
= K_\xi
= a L_\xi + v_\xi .$$
Since $n \geq 3$, we have $\dim \xi = n-2 \geq 1$.
Let $V_{n-2}$ denote volume in $\RR^{n-2}$.  Since translation does
not change volume,
$$V_{n-2} (L_\xi) = 
V_{n-2} (K_\xi) = V_{n-2} (a L_\xi + v_\xi) = 
a^{n-2} V_{n-2}(L_\xi).$$
Since $L$ has non-empty interior, $V_{n-2}(L_\xi) > 0$.
It follows that $a = 1$ and $v_\xi = o$.    
This implies that $v \in \xi^{\perp} = \hbox{Span}\{u, e_n\}$; that is,
$v = bu + ce_n$ for some $b, c \in \RR$.
Moreover $v \cdot u = 0$, since we assumed $v \in u^\perp$
to begin with.  It follows that $v = c e_n$ for some $c \in \RR$.

The positive coordinate halfspace $(e_n^\perp)^+$ 
supports both $K$ and $L$,
so that $h_K(-e_n) = h_L(-e_n) = 0$.  Since 
$K_u = L_u + v$ and $-e_n \in u^\perp$, we have
$$0 = h_K(-e_n) = 
h_{K_u}(-e_n) = h_{L_u}(-e_n) + v \cdot (-e_n) = 
h_L(-e_n) - c = - c, $$
so that $v = ce_n = 0$, and $K_u = L_u$.

We have shown that $K_u = L_u$
for all $u \in e_n^\perp$.  If $v \in \SS^{n-1}$ then 
$v^\perp \cap e_n^\perp \neq \emptyset$, so $v \in u^\perp$
for some $u \in e_n^\perp$.  It now follows that
$$h_K(v) = h_{K_u}(v) = h_{L_u}(v) = h_L(v).$$
In other words, $h_K(v) = h_L(v)$ for all $v \in \SS^{n-1}$, so that $K=L$.
\end{proof}

\section{Conditions for Equality}
\label{sec.last}

We now have the tools to verify
the equality condition for~(\ref{bmcc}) and~(\ref{mmv}).

\begin{proof}[Proof of the Equality Condition]  
For $\lambda \in [0,1]$, denote $K_\lambda = (1-\lambda)K + \lambda L$.
Suppose that equality holds in~(\ref{bmcc}) for some
$\lambda \in (0,1)$, where $K$ and $L$
have non-empty interiors.  
Since equality holds for~(\ref{bmcc}) if and only if equality holds
for~(\ref{mmv}) and~(\ref{mmv1}), the homogeneity of mixed volumes allows us to assume
without loss of generality that 
$$V_n(K) = V_n(L) = V_n(K_\lambda) = V_{n-1,1}(K_\lambda ,L) = 1.$$ 
The concavity of $V_n^{1/n}$ then implies that
$V_n(K_\lambda) = 1$ for {\bf\em all} $\lambda \in [0,1]$.

Fix a value of $\lambda \in [0,1]$, and suppose $M \in \KK_n$ such that $V_{n-1,1}(K_\lambda ,M) \leq 1$.
Since $V_{n-1,1}(\cdot, \cdot)$ is Minkowski linear in its second parameter, we have
$$V_{n-1,1}(K_\lambda ,(1-x)L + xM) = (1-x)V_{n-1,1}(K_\lambda ,L) + xV_{n-1,1}(K_\lambda ,M)
\leq 1$$
for all $x \in [0,1]$.  Since $V_n(K_\lambda) = 1$, it follows from the inequality~(\ref{mmv}) that
\begin{align*}
f(x) & = V_n((1-x)L + xM) \\ & = V_n((1-x)L + xM)\; V_n(K_\lambda)^{n-1} 
\\ & \leq V_{n-1,1}(K_\lambda, (1-x)L + xM)^n \leq 1,
\end{align*}
for all $x \in [0,1]$.  Since $f(0) = V_n(L) = 1$, it follows that $f'(0) \leq 0$. 
On computing $f'(0)$ as in~(\ref{dv0}), we have
$V_{n-1,1}(L, M) \leq 1$.

We have shown that if $V_{n-1,1}(K_\lambda ,M) \leq 1$, then $V_{n-1,1}(L ,M) \leq 1$.
The homogeneity of volume now implies that $V_{n-1,1}(L ,M) \leq V_{n-1,1}(K_\lambda ,M)$
for all $M$.  But the argument above can be repeated, reversing the roles of $K_\lambda$ and $L$.
Therefore,  
$$V_{n-1,1}(K_\lambda ,M) = V_{n-1,1}(L ,M)$$
for all $M$ and all $\lambda \in [0,1]$.

If $n=2$, set $\lambda = 0$.  Proposition~\ref{tri} then implies that $K$ and $L$ are translates.  This case
is the starting point for induction on dimension $n$.

If $n \geq 3$, then assume the theorem holds in lower dimension.
If $u$ is a unit vector, let $M$ denote the line segment $\overline{ou}$,
so that
$$V_{n-1,1}(K_\lambda ,\overline{ou}) = V_{n-1,1}(L , \overline{ou}).$$
It follows from~(\ref{projvol}) that
$$V_{n-1}((K_\lambda)_u) = V_{n-1}(L_u)$$
for all $\lambda \in [0,1]$.  In other words,
$$V_{n-1}((1-\lambda) K_u + \lambda L_u) = V_{n-1}(L_u)$$
for all $\lambda \in [0,1]$.  This is the equality case of the Brunn-Minkowski inequality in dimension $n-1$.
Since $K$ and $L$ have interior, so do their projections (relative to $(n-1)$-dimensional subspaces). 
It now follows from the induction hypothesis that $K_u$ and $L_u$ are homothetic,   
Since $K$ and $L$ have homothetic projections
in every direction $u$, it follows from Theorem~\ref{rogers} that $K$ and $L$ are homothetic.
\end{proof}

\providecommand{\bysame}{\leavevmode\hbox to3em{\hrulefill}\thinspace}
\providecommand{\MR}{\relax\ifhmode\unskip\space\fi MR }
\providecommand{\MRhref}[2]{%
  \href{http://www.ams.org/mathscinet-getitem?mr=#1}{#2}
}
\providecommand{\href}[2]{#2}


\begin{thebibliography}{10}


\bibitem{Bonn2}
T.~Bonnesen and W.~Fenchel, \emph{Theory of {C}onvex {B}odies}, BCS Associates, Moscow, Idaho,
  1987.

\bibitem{Borell}
C.~Borell, \emph{Capacitary inequalities of the {B}runn-{M}inkowski type},
  Math. Ann. \textbf{263} (1983), 179--184.

\bibitem{Lieb}
H.~Brascamp and E.~Lieb, \emph{On extensions of the {B}runn-{M}inkowski and
  {P}r\'{e}kopa-{L}eindler theorems, including inequalities for log concave
  functions, and with an application to the difsion equation}, J. Functional
  Analysis \textbf{22} (1976), 366--389.

\bibitem{Ewald}
G.~Ewald, \emph{Combinatorial {C}onvexity and {A}lgebraic {G}eometry}, Springer
  Verlag, New York, 1996.

\bibitem{Gard-BM}
R.~J. Gardner, \emph{The {B}runn-{M}inkowski inequality}, Bull. Amer. Math.
  Soc. \textbf{39} (2002), 355--405.

\bibitem{Gard2006}
R.~J. Gardner, \emph{Geometric {T}omography (2nd {E}d.)}, Cambridge University Press,
  New York, 2006.

\bibitem{Gard-Gron}
R.~J. Gardner and P.~Gronchi, \emph{A {B}runn-{M}inkowski inequality for the
  integer lattice}, Trans. Amer. Math. Soc. \textbf{353} (2001), 3995--4024.

\bibitem{Had-proj}
H.~Hadwiger, \emph{{S}eitenrisse konvexer {K}\"{o}rper und {H}omothetie}, Elem.
  Math. \textbf{18} (1963), 97--98.

\bibitem{Had-Oh}
H.~Hadwiger and D.~Ohmann, \emph{{B}runn-{M}inkowskischer {S}atz und
  {I}soperimetrie}, Math. Z. \textbf{66} (1956), 1--8.

\bibitem{Hen-Mac}
R.~Henstock and A.~M. Macbeath, \emph{On the measure of sum sets, {I}. {T}he
  theorems of {B}runn, {M}inkowski and {L}usternik}, Proc. London Math. Soc.
  \textbf{3} (1953), 182--194.

\bibitem{Lut1988}
E.~Lutwak, \emph{Intersection bodies and dual mixed volumes}, Adv. Math.
  \textbf{71} (1988), 232--261.

\bibitem{bmf1}
E.~Lutwak, \emph{The {B}runn-{M}inkowski-{F}irey theory. {I}. {M}ixed volumes and
  the {M}inkowski problem}, J. Differential Geometry \textbf{38} (1993),
  131--150.

\bibitem{rogers}
C.~A. Rogers, \emph{Sections and projections of convex bodies}, Portugal. Math.
  \textbf{24} (1965), 99--103.

\bibitem{red}
R.~Schneider, \emph{Convex {B}odies: {T}he {B}runn-{M}inkowski {T}heory},
  Cambridge University Press, New York, 1993.

\bibitem{tony}
A.~C. Thompson, \emph{Minkowski {G}eometry}, Cambridge University Press, New
  York, 1996.

\bibitem{Webster}
R.~Webster, \emph{Convexity}, Oxford University Press, New York, 1994.

\end{thebibliography}
\end{document}